\documentclass[11pt]{article}

\usepackage[a4paper,margin=1in]{geometry}
\usepackage{amsmath,amssymb,amsthm,mathtools}
\usepackage{mathrsfs}
\usepackage{aliascnt}
\usepackage{hyperref}
\usepackage[nameinlink,capitalize]{cleveref}

\hypersetup{
  colorlinks=true,
  linkcolor=blue,
  citecolor=blue,
  urlcolor=blue
}

\numberwithin{equation}{section}

\newtheorem{theorem}{Theorem}[section]

\newaliascnt{lemma}{theorem}
\newtheorem{lemma}[lemma]{Lemma}
\aliascntresetthe{lemma}

\newaliascnt{corollary}{theorem}

\aliascntresetthe{corollary}

\newaliascnt{proposition}{theorem}

\aliascntresetthe{proposition}

\theoremstyle{remark}
\newaliascnt{remark}{theorem}

\aliascntresetthe{remark}

\crefname{theorem}{Theorem}{Theorems}
\Crefname{theorem}{Theorem}{Theorems}
\crefname{lemma}{Lemma}{Lemmas}
\Crefname{lemma}{Lemma}{Lemmas}
\crefname{corollary}{Corollary}{Corollaries}
\Crefname{corollary}{Corollary}{Corollaries}
\crefname{proposition}{Proposition}{Propositions}
\Crefname{proposition}{Proposition}{Propositions}
\crefname{remark}{Remark}{Remarks}
\Crefname{remark}{Remark}{Remarks}

\newcommand{\Fp}{\mathbb F_p}
\newcommand{\Fpx}{\mathbb F_p^{\times}}
\newcommand{\QR}{\mathcal Q}
\newcommand{\one}{\mathbf 1}
\newcommand{\sgn}{\operatorname{sgn}}
\newcommand{\Pf}{\operatorname{Pf}}
\newcommand{\adj}{\operatorname{adj}}

\title{A Pfaffian Proof and Generalization of a Conjecture of Sun Zhiwei}
\author{Hong-Ge Chen,\qquad Fei Liu}
\date{}

\begin{document}
\maketitle

\begin{abstract}
Let $p$ be an odd prime, let $n=(p-1)/2$, and let $\chi=(\frac{\cdot}{p})$, with $\chi(0)=0$.  For $a\in\mathbb F_p^\times$ define
\[
  D_a(x)=\det_{1\le i,j\le n}(x+\chi(i^2-aj)),
  \qquad
  D_a^{(0)}(x)=\det_{0\le i,j\le n}(x+\chi(i^2-aj)).
\]
We prove
\[
  D_a(0)=0
  \quad\Longleftrightarrow\quad
  p\equiv 3 \pmod 4
  \quad\text{and}\quad
  \chi(a n!)=1.
\]
For $p\equiv3\pmod4$ we also give explicit Pfaffian-square factorizations of $D_a(x)$ and $D_a^{(0)}(x)$.  Let $s_p=(-1)^{\lfloor(p+1)/8\rfloor}$.  If $\chi(a n!)=1$, then $s_pD_a(x)/x=s_pD_a^{(0)}(x)$ is a positive integer square.  If $\chi(a n!)=-1$, then there is a positive integer $\sigma$ such that
\[
  s_pD_a(x)=\sigma^2(nx-1),\qquad
  s_pD_a^{(0)}(x)=-\sigma^2\bigl(n+(2n+1)x\bigr).
\]
The case $a=n!$ settles Sun's Conjecture 4.1.
\end{abstract}

\noindent\textbf{2020 Mathematics Subject Classification.}
11C20, 11A15, 15A15, 15A66.

\smallskip
\noindent\textbf{Keywords.}
Legendre symbol, quadratic residue, determinant, Pfaffian, Zolotarev lemma.

\section{Introduction and main results}\label{sec:introduction}

Throughout the paper $p$ is an odd prime,
$
  n=\frac{p-1}{2},
$
and $\chi$ denotes the Legendre symbol modulo $p$, extended by $\chi(0)=0$.  For $a\in\Fp^\times$ define
\begin{align*}
  D_a(x)
  &=\det_{1\le i,j\le n}\bigl(x+\chi(i^2-aj)\bigr),\\
  D_a^{(0)}(x)
  &=\det_{0\le i,j\le n}\bigl(x+\chi(i^2-aj)\bigr).
\end{align*}
The variable $x$ is an indeterminate.  For $a=n!$, these are the determinants appearing in \cite[Conjecture~4.1]{Sun}.

Sun studied several determinants with Legendre-symbol entries in \cite{Sun}.  Among them is
\[
  W_p=\det_{0\le i,j\le n}\chi(i^2-n!j).
\]
Sun evaluated $\chi(W_p)$ in \cite[Theorem~1.5]{Sun} and conjectured the stronger square statement for the two polynomial deformations above, together with the vanishing criterion for $D_{n!}(0)$.  Related determinant problems have since been treated in \cite{GrinbergSunZhao,Wu,WangWu,RenLuo,RenSun,YangZhang}.  The result here is an exact polynomial factorization, not only a square-class or congruence statement.  We prove Sun's conjecture through an $a$-uniform theorem.

First we record the special case that answers \cite[Conjecture~4.1]{Sun}.

\begin{theorem}[Sun's Conjecture 4.1]\label{thm:sun}
One has
\[
  \det_{1\le i,j\le n}\chi(i^2-n!j)=0
\]
if and only if $p\equiv 3\pmod 4$.  If $p\equiv 3\pmod 4$, then
\[
  s_p\det_{0\le i,j\le n}\bigl(x+\chi(i^2-n!j)\bigr)
\]
and
\[
  s_p\,\frac{1}{x}
  \det_{1\le i,j\le n}\bigl(x+\chi(i^2-n!j)\bigr)
\]
are equal positive integer squares and are independent of $x$.
\end{theorem}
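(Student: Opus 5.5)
The plan is to deduce \cref{thm:sun} from the $a$-uniform statements announced in the abstract, taking $a=n!$. The first step is a classical computation of $\chi(n!)$. If $p\equiv 1\pmod 4$, then $(n!)^2\equiv -1\cdot(-1)^{n}\cdot\ldots$; more precisely, Wilson's theorem gives $(n!)^2\equiv(-1)^{n+1}\pmod p$. Hence for $p\equiv 1\pmod 4$ we have $(n!)^2\equiv -1$, while for $p\equiv 3\pmod 4$ we have $n!\equiv\pm1$. In the second case $\chi(n!\cdot n!)=\chi((n!)^2)=1$ automatically, since $(n!)^2$ is a nonzero square. So with $a=n!$ we always have $\chi(a\,n!)=\chi((n!)^2)=1$, for every odd $p$.

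Next I would prove the vanishing criterion $D_a(0)=0\iff p\equiv3\pmod4$ and $\chi(an!)=1$. Write $M_a=(\chi(i^2-aj))_{1\le i,j\le n}$. The rows are indexed by the squares $i^2$, which run over the set $\QR$ of quadratic residues. The columns are indexed by $aj$, which runs over $a\cdot\{1,\dots,n\}$. Reindexing rows and columns by these residues changes the determinant only by a sign, which is a Zolotarev-type permutation sign. So up to sign, $D_a(0)=\det(\chi(r-s))_{r\in\QR,\,s\in a\{1,\ldots,n\}}$. When $p\equiv3\pmod4$, $-1$ is a nonresidue, so $\chi(s-r)=-\chi(r-s)$. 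If moreover $a\{1,\dots,n\}$ coincides with $\QR$ up to sign choices in a way governed by $\chi(an!)$, the matrix becomes, after multiplying some columns by $\chi(-1)$, an $n\times n$ skew-symmetric matrix $A=(\chi(r-s))_{r,s\in\QR}$ of odd order $n$. Its determinant is then $0$, and its Pfaffian structure is what I intend to exploit. The sign bookkeeping uses the fact that $\prod_j aj=a^n n!$, whose character is $\chi(a)^n\chi(n!)$. Since $n$ is odd here, this equals $\chi(an!)$, and this is the invariant deciding whether the column set is ``aligned'' with $\QR$. For $p\equiv1\pmod4$ the matrix is symmetric instead, and I would show nonvanishing by computing modulo $2$ or via Gauss-sum eigenvalues of the circulant-like structure.

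For the polynomial statements, note that adding $x$ to every entry is a rank-one update. Hence $\det(B+x\one\one^T)=\det B+x\,\one^T\adj(B)\one$, which is linear in $x$. For the bordered matrix $D_a^{(0)}$, the extra row $i=0$ has entries $\chi(-aj)$ and the extra column $j=0$ has entries $\chi(i^2)=1$. I would embed everything into a skew-symmetric matrix of even order $n+1$ by adjoining a vertex with entries $\pm1$. Then both $D_a^{(0)}(x)$ and $D_a(x)/x$ become, up to the sign $s_p$, squares of Pfaffians of integer matrices. The key identity is that for skew $A$ of odd order, $\adj(A)=\mathbf v\mathbf v^T$, where $v_k$ is the Pfaffian of $A$ with row and column $k$ deleted. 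This gives $\one^T\adj(A)\one=(\sum_k v_k)^2$, which is exactly the Pfaffian of $A$ bordered by an all-ones vector. Since $D_a(0)=0$, $D_a(x)=x(\sum v_k)^2$ up to sign, and a similar bordering computation shows that $D_a^{(0)}(x)$ is independent of $x$ with the same value.

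The main obstacle is the sign: showing that the global sign equals $s_p=(-1)^{\lfloor(p+1)/8\rfloor}$ and that the square is nonzero, i.e.\ positive. For the sign I would combine the Zolotarev sign of the reindexing permutation with $\chi(-1)^{\#\text{flipped columns}}$, and evaluate it using $\chi(2)$ and Gauss's lemma, since $s_p$ depends on $p\bmod 8$. For nonvanishing, I would reduce the bordered Pfaffian modulo $2$. Off the diagonal all entries are $\pm1$, so modulo $2$ it becomes the Pfaffian of the all-ones-off-diagonal matrix of even order, which is $1$. Hence the square is odd and in particular nonzero.
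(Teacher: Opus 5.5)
Your choice of $a=n!$, with $\chi(a\,n!)=\chi((n!)^2)=1$, is fine. However, the structural step that everything else rests on is false. Write $a\{1,\dots,n\}=\{\varepsilon_q q:q\in\QR\}$. A column with $\varepsilon_q=-1$ is $(\chi(r+q))_{r\in\QR}$, and no sign change turns it into the column $(\chi(r-q))_{r\in\QR}$. The former has no zero entry, because $-q\notin\QR$, while every column of $P=(\chi(r-s))_{r,s\in\QR}$ has a zero on the diagonal.

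Already for $p=7$ and $a=n!\equiv-1$, the column set is $\{6,5,4\}=\{-1,-2,4\}$, so two of the three columns are of this kind. Your mechanism would also apply to every half-system. It would therefore force $D_a(0)=0$ for all $a$ when $p\equiv3\pmod4$, contradicting $D_a(0)\equiv 2C_a\not\equiv0\pmod p$ when $\chi(an!)=-1$.

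Consequently the odd-order skew argument for vanishing, the identity $\adj=vv^t$ with bordering, and the sign count via ``flipped columns'' have nothing to apply to. The missing idea is that the flipped columns are columns of the symmetric group matrix $B=(\chi(r+q))_{r,q\in\QR}$. So the $x=0$ matrix must be factored as $BE$ with $E=\begin{pmatrix}S&0\\U&I\end{pmatrix}$. Here $S$ is the principal block, indexed by $A=\{q:\varepsilon_q=1\}$, of $T=B^{-1}P$. The matrix $T$ is skew because $B$ and $P$ are commuting group matrices on $\QR$, and one also has $T\one=0$ and $B^{-1}\one=-\one$.

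The odd-order skew matrix is $S$, of order $m_a=|A|$, and $m_a$ is odd exactly when $\chi(an!)=1$; this is the correct use of your product $\prod_j aj$. Your adjugate and bordering computation must then be run on $E-xJ$ (with $J=\one\one^t$) and on its bordered version for $D_a^{(0)}$. You additionally need $\det B=-\beta_p^2$, which follows from $\lambda_\psi=\lambda_{\psi^{-1}}$ with $n$ odd. This sign cancels the sign in $\det(E-xJ)=-x\Delta$ and leaves exactly the column-permutation sign $s_p$.

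Your nonvanishing arguments also fail. For $p\equiv1\pmod4$ the matrix is neither symmetric nor a group matrix, since rows are indexed by $\QR$ and columns by a half-system. Reduction modulo $2$ is useless here: exactly $n/2$ entries are zero, one in each of $n/2$ rows. So for $p\ge13$ at least two rows are congruent to the all-ones row and the determinant is even.

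For positivity, the claim that the square is odd is false. For $p=7$ one gets $D_{n!}(x)=-4x$ and $D_{n!}^{(0)}(x)=-4$, with $s_7=-1$, so the square is $4$.

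The paper obtains both nonvanishing statements from the Vandermonde congruence $D_a(x)\equiv C_a(x+1-\chi(a)c)\pmod p$, with $C_a\not\equiv0$ and $c\equiv n!$. For $p\equiv1\pmod4$ one has $c^2\equiv-1$, so $1-\chi(a)c\not\equiv0$. For $\chi(an!)=1$ the congruence reads $D_a(x)\equiv C_ax$, so the $x$-coefficient is nonzero. Finally, since $T$ has rational entries, you also need the remark that a rational number whose square is an integer is itself an integer.
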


This follows from two uniform statements.

\begin{theorem}[Vanishing criterion]\label{thm:vanishing}
Let $p$ be an odd prime and let $a\in\Fp^\times$.  Then
$
  D_a(0)=0
$
if and only if
\[
  p\equiv 3\pmod 4
  \quad\text{and}\quad
  \chi(a n!)=1.
\]
\end{theorem}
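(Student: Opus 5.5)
The plan is to recast $D_a(0)$ as a minor of the Paley (Jacobsthal) matrix and use its algebraic structure. Let $Q=(\chi(x-y))_{x,y\in\Fp}$. It is classical that $Q\one=0$, $Q^2=pI-J$ and $Q^{T}=\chi(-1)Q$. The row indices $i^2$, $1\le i\le n$, run bijectively over the set $\QR$ of quadratic residues. The column indices $aj$, $1\le j\le n$, run over $aS$, where $S=\{1,\dots,n\}$ is a half system modulo $\pm$. Up to a sign coming from reordering rows and columns,
\[
  D_a(0)=\pm\det\bigl(\chi(r-c)\bigr)_{r\in\QR,\ c\in aS},
\]
so everything depends only on the pair of $n$-sets $\QR$ and $aS$ inside $\Fp^\times$.

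\emph{Case $p\equiv3\pmod4$.} Here $-1\notin\QR$, so $\QR$ is itself a half system, and we can write $aS=\{\epsilon_h h: h\in\QR\}$ with $\epsilon_h=\pm1$. Put $A=(\chi(r-h))_{r,h\in\QR}$, which is skew-symmetric, and $B=(\chi(r+h))_{r,h\in\QR}$, which is symmetric. The matrix $M$ has the columns of $A$ indexed by $T=\{h:\epsilon_h=1\}$ and the columns of $B$ indexed by the complement $T^c$. From $Q^2=pI-J$ restricted to the blocks $\QR\times(\pm\QR)$ we obtain the identities
\[
  A^2-B^2=pI-J\qquad\text{and}\qquad AB=BA\ \text{(up to sign)}.
\]
These should let us express $M$ as a product involving a single skew-symmetric matrix of size $n$, bordered appropriately, so that $\det M$ is (up to sign) a Pfaffian square, possibly times a linear factor. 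Since $n$ is odd, any skew-symmetric $n\times n$ matrix is singular. The expected dichotomy is that $\det M=0$ exactly when $|T^c|$ has a prescribed parity.

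To translate that parity into the stated condition, we use a Gauss/Zolotarev-lemma computation. The product of the elements of $aS$ is $a^n n!$, while the product of the elements of $\QR$ is a fixed quantity. Comparing the two gives $(-1)^{|T^c|}$ in terms of $\chi(a\,n!)$; with $p\equiv3\pmod4$ the factor $\chi(-1)^{|T^c|}$ converts this into $\chi(a\,n!)=\pm1$. The sign convention should be checked on the extreme case $aS=\QR$. There $M=A$ is skew of odd order, so $\det M=0$, which must correspond to $\chi(a n!)=1$.

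\emph{Case $p\equiv1\pmod4$.} We must show $D_a(0)\neq0$ for every $a$. Here $Q$ is symmetric and $\QR=-\QR$. I would compute $MM^{T}$, or the Schur complement of the $\QR$-block of $Q$, using $Q^2=pI-J$, and show that $\det M$ equals a nonzero integer. One route is to prove that $\det M^2$ is congruent to a power of $p$ times a unit modulo a suitable prime. Another is to reduce modulo $2$ after accounting for the $J$ correction. The cleanest route is probably to show that the complementary minor of $Q$ (rows $\QR$, columns $\Fp^\times\setminus aS$) is related to $M$ by Jacobi's complementary-minor identity applied to the invertible matrix $Q+J$ (with $(Q+J)^2=pI+(p-1)J$ up to adjustments). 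This forces both minors to be nonzero, or both zero, and a parity argument then excludes zero.

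The main obstacle I expect is the step producing an exact Pfaffian/linear factorization in the $p\equiv3$ case for \emph{general} $T$, not just $T=\QR$. Handling the mixture of $A$-columns and $B$-columns requires finding the right conjugating matrix, built from the relations between $A$ and $B$, that turns $M$ into a bordered skew-symmetric matrix. The nonvanishing for $p\equiv1\pmod4$ via Jacobi's identity is the second delicate point.
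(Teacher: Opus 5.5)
Your reduction to the minor $\det(\chi(r-c))_{r\in\QR,\,c\in aS}$ is correct and agrees with the paper. So are the split of its columns into $A$-columns (indexed by $T$) and $B$-columns (indexed by $T^c$) and the parity computation $\chi(an!)=(-1)^{|T^c|}$. But neither direction of the criterion is actually proved.

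\textbf{Vanishing direction} ($p\equiv3\pmod4$, $|T|$ odd). You stop at ``these should let us express $M$ as a product.'' The missing step is this. Once $B$ is known to be invertible, $B^{-1}A$ is skew-symmetric, because $B^t=B$, $A^t=-A$ and $AB=BA$. Column by column, $M=B\,E$ with $E=\begin{pmatrix}K&0\\ L&I\end{pmatrix}$ (ordering $T$ first), where $\begin{pmatrix}K\\ L\end{pmatrix}$ are the $T$-columns of $B^{-1}A$. Hence $\det M=\det B\cdot\det K$. Since $K$ is a principal block of a skew-symmetric matrix and has odd size $|T|$, $\det K=0$. This requires $\det B\neq0$, which you do not prove and which does not follow from your relations. (Incidentally, $Q^2=pI-J$ holds only for $p\equiv1\pmod4$. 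In general $QQ^t=pI-J$, so for $p\equiv3$ the block identity is $A^2-B^2=2J-pI$. Your relation $AB=BA$ is correct, and it is the one that matters.)

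\textbf{Nonvanishing direction.} This is the more serious gap.
\begin{itemize}
\item For $p\equiv3$ and $|T|$ even, a Pfaffian factorization only gives $\det M=\pm\det B\,\Pf(K)^2$. Nothing in your argument rules out $\Pf(K)=0$, so the ``expected dichotomy'' is assumed, not derived.
\item For $p\equiv1\pmod4$ you list routes without carrying any of them out.
\item The mod~$2$ route cannot work. Since $\QR=-\QR$, one has $|\QR\cap aS|=n/2$. Modulo $2$ the matrix is $J$ minus a partial permutation matrix with $n/2$ ones, so the $n/2$ rows indexed by $\QR\setminus aS$ are all-ones rows, and the determinant is even once $p\ge13$.
\end{itemize}

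\textbf{The missing idea} is reduction modulo $p$ via $\chi(z)\equiv z^n$.
\begin{itemize}
\item Since $q^n=1$ for $q\in\QR$, the matrix $(x+\chi(i^2-aj))$ factors mod $p$ into a Vandermonde matrix in the $i^2$ and a coefficient matrix in $y_j=-aj$.
\item Via alternants, that coefficient matrix has determinant a nonzero multiple of $x+1+(-1)^{n-1}\prod_j y_j$. This gives $D_a(0)\equiv C_a\bigl(1-\chi(a)\,n!\bigr)\pmod p$ with $C_a\not\equiv0$.
\item By Wilson, $(n!)^2\equiv(-1)^{n+1}$. So $D_a(0)$ is nonzero mod $p$ when $p\equiv1\pmod4$ (as $n!\not\equiv\pm1$), and when $p\equiv3$ with $\chi(an!)=-1$ (where it equals $2C_a$).
\item The same lemma with columns indexed by $\QR$ gives $\det B\equiv 2C\not\equiv0\pmod p$. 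This supplies the invertibility of $B$ needed for the vanishing direction.
\end{itemize}
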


For the square formulas, put
$
  s_p=(-1)^{\lfloor (p+1)/8\rfloor}.
$

\begin{theorem}[Square and linear-square formulas]\label{thm:square}
Assume $p\equiv 3\pmod 4$ and let $a\in\Fp^\times$.

\smallskip
\noindent\textup{(i)} If $\chi(a n!)=1$, then there exists a positive integer $\rho_{p,a}$ such that
\[
  s_p\,\frac{D_a(x)}{x}
  =s_p\,D_a^{(0)}(x)
  =\rho_{p,a}^{\,2}.
\]
In particular, both expressions are independent of $x$.

\smallskip
\noindent\textup{(ii)} If $\chi(a n!)=-1$, then there exists a positive integer $\sigma_{p,a}$ such that
\[
  s_p\,D_a(x)=\sigma_{p,a}^{\,2}(nx-1)
\]
and
\[
  s_p\,D_a^{(0)}(x)
  =-\sigma_{p,a}^{\,2}\bigl(n+(2n+1)x\bigr).
\]
\end{theorem}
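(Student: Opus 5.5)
The plan is to realise the matrices behind $D_a(x)$ and $D_a^{(0)}(x)$, up to row and column permutations and sign changes, as skew-symmetric matrices perturbed by rank one. Squares then come from Pfaffians, and the overall sign is tracked by Zolotarev's lemma.

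\textbf{Reduction to skew-symmetric form.} Since $p\equiv 3\pmod 4$, we have $\chi(-1)=-1$ and $n$ is odd. The row labels $i^2$ ($1\le i\le n$) run through the set $R$ of quadratic residues exactly once each. The column labels $t=aj$ ($1\le j\le n$) form a half system: they contain exactly one element of each pair $\{\pm u\}$. Hence $t\mapsto \chi(t)t$ is a bijection from the column labels onto $R$. I would reindex columns by $r'=\chi(aj)\,aj\in R$. For columns with $\chi(aj)=1$, the entry is $\chi(r-r')$, which is skew in $(r,r')$ because $\chi(-1)=-1$. For the remaining columns the entry is $\chi(r+r')$. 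I would try to turn these into skew entries by multiplying rows and columns by suitable values of $\chi$ and using the Paley identities $Q^2=pI-J$, $QJ=0$ for $Q=(\chi(u-v))_{u,v\in\Fp}$. The aim is an identity of the form
\[
M_a+xJ=P\,(S+\text{(low-rank term)}+xJ)\,P',
\]
with $S$ a real skew-symmetric $n\times n$ matrix and $P,P'$ signed permutation matrices. This is the step I expect to be the main obstacle. Making the nonresidue columns skew-compatible probably requires passing to a bordered matrix (adding the row and column indexed by $0$, which is exactly where $D_a^{(0)}$ enters) and computing a Schur complement. The low-rank term should vanish in case (i) and contribute the constant $-1$ and the coefficient $n$ in case (ii).

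\textbf{Sign bookkeeping.} The sign $\det P\det P'$ will be computed via Zolotarev's lemma. The relevant permutation sends $j\mapsto aj$ on a half system, and its sign is a Gauss-lemma count. Comparing with the ordering $i\mapsto i^2$ of $R$, whose sign involves $n!$ modulo $p$ and the factor $s_p=(-1)^{\lfloor(p+1)/8\rfloor}$, should produce exactly $\chi(a n!)$ and $s_p$. This bookkeeping should also show that the case split (i)/(ii) is precisely whether the reindexing is realised with or without the extra correction term.

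\textbf{Determinant formulas.} For a skew-symmetric $S$ of odd size $n$ and the all-ones vector $u$, I would use $\det S=0$ and $\adj S=ww^{T}$, where $w_k=(-1)^k\Pf(S_{\hat k\hat k})$. Then
\[
\det(S+x\,uu^T)=x\,(u^Tw)^2 .
\]
This gives (i): $D_a(x)/x$ is $s_p$ times a square independent of $x$. Applying the same formula to the bordered skew matrix of even size $n+1$, where the determinant is $\Pf^2$ and $u^T\adj(S)u=0$ because the adjugate is skew, gives the equality $D_a(x)/x=D_a^{(0)}(x)$ with the same square. In case (ii) the rank-one correction makes the determinant linear in $x$. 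I would evaluate the two coefficients separately: the constant term from the Pfaffian structure, and the $x$-coefficient by $u^T\adj(\cdot)u$ together with the row sums $\sum_{j}\chi(r-aj)$ and the Paley relations. This should yield $\sigma^2(nx-1)$ and $-\sigma^2(n+(2n+1)x)$.

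\textbf{Positivity and nondegeneracy.} Finally, positivity of $\rho_{p,a}$ and $\sigma_{p,a}$ only requires showing that the relevant Pfaffian combination is nonzero. I would do this by reducing modulo $2$ or modulo $p$, using that $S$ has $\pm1$ off-diagonal entries, so that the nonvanishing matches the vanishing criterion of \cref{thm:vanishing}.
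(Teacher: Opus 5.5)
\textbf{There is a genuine gap.} It is the reduction step you flag as the main obstacle. That step is the whole content of the theorem, and the mechanism you propose cannot supply it.

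Signed permutations, and rescaling rows or columns by values of $\chi$, preserve the number of zero entries. Reindex the columns by $q=\chi(aj)aj\in\QR$. The matrix $N_a=(\chi(r-\varepsilon_q q))_{r,q\in\QR}$ then vanishes only at $r=q$ with $\varepsilon_q=1$, because $\chi(r+q)\neq0$ for $r,q\in\QR$. So $N_a$ has exactly $m_a=|\{j:aj\in\QR\}|$ zero entries, whereas an $n\times n$ skew-symmetric matrix has at least $n$. Hence the identity $N_a=\Pi S\Pi'$ with signed permutations $\Pi,\Pi'$, which is what you need in case (i) where the low-rank term is supposed to vanish, forces $\{aj:1\le j\le n\}=\QR$. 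That is an exceptional situation.

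Concretely, take $p=7$ and $a=3!=6$. Then $m_a=1$, and a direct computation gives $D_6(x)=-4x$, so $\rho_{7,6}^2=4$. But an odd-order skew $S$ with $\pm1$ off-diagonal entries, as your positivity step assumes, has $\det(S+xJ)=x(\one^tw)^2$. Each $w_k$ is the Pfaffian of an even $\pm1$ skew matrix, hence odd, so $\one^tw$ is odd and the square would be odd. The square is not a single Pfaffian of a sign matrix. In this example $S=(0)$, $\Pf(S^+)=1$, and the entire $4$ equals $-\det(\chi(r+q))_{r,q\in\QR}$.

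\textbf{The missing idea} is to factor through $B=(\chi(r+q))_{r,q\in\QR}$ rather than through signed permutations. Your Paley relation is relevant, but with the correct sign: for $p\equiv3\pmod4$ the Paley matrix $Q$ is skew, so $Q^2=J-pI$. Split $\Fp=\{0\}\cup\QR\cup(-\QR)$ and set $P=(\chi(r-q))_{r,q\in\QR}$. The block of $Q^2$ with rows in $\QR$ and columns in $-\QR$ reads $J+PB-BP=J$, so $BP=PB$. Equivalently, both matrices are functions of $qr^{-1}$ on the cyclic group $\QR$.

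Since $B$ is symmetric, $P$ is skew, $B\one=-\one$ and $P\one=0$, the matrix $T=B^{-1}P$ is skew with $T\one=0$. The columns of $N_a$ with $\varepsilon_q=1$ are columns of $P$ and the others are columns of $B$. Hence $N_a=BE$ with $E=\begin{psmallmatrix}S&0\\U&I\end{psmallmatrix}$ and $S=T|_{A_a\times A_a}$, and $N_a+xJ=B(E-xJ)$.

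The square then has two sources:
\begin{itemize}
\item $\det B=-\beta_p^2$. The eigenvalues satisfy $\lambda_\psi=\lambda_{\psi^{-1}}$, and since $n$ is odd only the trivial character (eigenvalue $-1$) is self-inverse.
\item Schur complements using $\one_k^tU=-\one_m^tS$. These give $\det(E-xJ)=\det S\,(1-nx)$ for $m_a$ even and $-x\Pf(S^+)^2$ for $m_a$ odd, and similarly for the bordered matrix.
\end{itemize}

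Three further expectations need correcting.
\begin{itemize}
\item The column-reordering sign is $s_p$ for every $a$; it does not depend on $\chi(an!)$.
\item The dichotomy (i)/(ii) is not a sign but the parity of $m_a$. Comparing $\prod_j aj$ with $\prod_q\varepsilon_q q$ gives $\chi(an!)=(-1)^{n-m_a}$.
\item Since $S$ has rational entries, nondegeneracy must come from the mod-$p$ congruence $D_a(x)\equiv C_a(x+1-\chi(a)n!)$. Integrality then follows from ``rational with integral square implies integral''. Reduction mod $2$ is useless: in case (i) with $m_a<n$, one has $N_a+xJ\equiv(1+x)J+Z\pmod 2$ with $Z$ a $0/1$ matrix of rank $m_a\le n-2$, whence $D_a(x)\equiv0\pmod 2$.
\end{itemize}
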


We briefly indicate the proof.  The main point is to replace the original determinants by a universal half-system determinant.  When $p\equiv3\pmod4$, the set $\{aj:1\le j\le n\}$ contains one representative from each pair $\{u,-u\}\subset\Fpx$; after the rows are indexed by the quadratic residues, the columns are encoded by a sign function on the quadratic residues.  The determinant without the rank-one term $xJ$ factors as $BE$, where $E=\begin{psmallmatrix}S&0\\ U&I\end{psmallmatrix}$ and $S$ is a principal block of the skew-symmetric matrix $B^{-1}P$.  This is the source of the Pfaffian square.  The row and column sums then control the $xJ$ term, while Zolotarev's lemma gives the remaining global column sign $(-1)^{\lfloor(p+1)/8\rfloor}$.

The sections are arranged to isolate these ingredients.  \Cref{sec:vandermonde} proves a Vandermonde-type congruence used for the vanishing criterion and for nonsingularity checks.  \Cref{sec:half-system} proves the Pfaffian factorization for arbitrary half-systems.  \Cref{sec:dilated} identifies the half-system attached to the dilation by $a$ and computes the column-permutation sign.  Finally, \Cref{sec:proofs} combines these ingredients to prove \cref{thm:vanishing,thm:square}, and hence \cref{thm:sun}.

\section{A Vandermonde congruence}\label{sec:vandermonde}

We shall use the elementary congruence
\[
  \chi(z)\equiv z^n\pmod p
  \qquad(z\in\Fp),
\]
where both sides are interpreted in $\Fp$.

\begin{lemma}\label{lem:vandermonde}
Let $y_1,\dots,y_n$ be distinct nonzero elements of $\Fp$.  Then, in $\Fp[x]$,
\[
  \det_{1\le i,j\le n}\bigl(x+\chi(i^2+y_j)\bigr)
  = C(y_1,\dots,y_n)
  \left(x+1+(-1)^{n-1}\prod_{j=1}^n y_j\right),
\]
where $C(y_1,\dots,y_n)\in\Fp^\times$.
\end{lemma}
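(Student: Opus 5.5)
The plan is to replace $\chi$ by the power map $z\mapsto z^n$, using the congruence $\chi(z)\equiv z^n\pmod p$ recalled above. This is valid for every $z\in\Fp$, including $z=0$ since $n\ge1$, so no care is needed when $i^2+y_j=0$. Put $t_i=i^2$ for $1\le i\le n$. These are distinct: $i^2=k^2$ with $1\le i,k\le n$ forces $i=\pm k$, hence $i=k$. They are also nonzero quadratic residues, so $t_i^n=\chi(t_i)=1$. Binomial expansion gives
\[
  x+(t_i+y_j)^n=x+\sum_{k=0}^{n}\binom nk t_i^k y_j^{\,n-k}.
\]
Using $t_i^n=1=t_i^0$, the $k=n$ term merges with the $k=0$ term, and the entry becomes
\[
  \bigl(x+1+y_j^n\bigr)\cdot 1+\sum_{k=1}^{n-1}\binom nk t_i^k y_j^{\,n-k}.
\]

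This expresses the matrix as a product $VM$ of two $n\times n$ matrices. The first is the Vandermonde matrix $V=(t_i^{k})_{1\le i\le n,\,0\le k\le n-1}$, whose determinant is nonzero because the $t_i$ are distinct. In the second, $M$, row $0$ is $(x+1+y_j^n)_j$ and row $k$, for $1\le k\le n-1$, is $\bigl(\binom nk y_j^{\,n-k}\bigr)_j$. Since $0<k<n<p$, each $\binom nk$ is a unit in $\Fp$ and can be pulled out of its row. The determinant is then reduced to that of the matrix with rows $x+1+y_j^n,\ y_j^{n-1},\dots,y_j$.

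This last determinant is linear in row $0$, so it splits into two pieces. The first is $(x+1)\det(1,y_j^{n-1},\dots,y_j)$. The second is $\det(y_j^n,y_j^{n-1},\dots,y_j)=\prod_j y_j\cdot\det(y_j^{n-1},y_j^{n-2},\dots,1)$. To compare them with the same row order, move the row of $1$'s in the first determinant from the top to the bottom. This costs $n-1$ adjacent transpositions and gives $(-1)^{n-1}\det(y_j^{n-1},\dots,1)$. The total is therefore
\[
  (-1)^{n-1}\Delta(y)\Bigl(x+1+(-1)^{n-1}\prod_j y_j\Bigr),
  \qquad \Delta(y)=\det(y_j^{n-1},\dots,1).
\]
Here $\Delta(y)$ is, up to sign, the Vandermonde determinant of the distinct $y_j$, so it is nonzero. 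Taking $C=(-1)^{n-1}\det V\prod_{k=1}^{n-1}\binom nk\,\Delta(y)\in\Fpx$ then gives the lemma. The only delicate point is this sign bookkeeping, together with the observation $t_i^n=1$, which collapses the $(n+1)$-term expansion into an $n\times n$ factorization. The nonzero hypothesis on the $y_j$ is not needed for this argument.
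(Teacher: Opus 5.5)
Your proof is correct and follows the paper's argument essentially step for step: the substitution $\chi(z)\equiv z^n$, the collapse $t_i^n=1$ giving a factorization into the Vandermonde matrix $(t_i^k)$ times a coefficient matrix, removal of the unit binomial coefficients, and splitting the first row into two alternants. Your explicit sign bookkeeping makes precise the $\pm$ that the paper leaves unspecified. Your remark that the hypothesis $y_j\ne0$ is never used is also correct.
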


\begin{proof}
Put $q_i=i^2$.  The elements $q_1,\dots,q_n$ are the nonzero quadratic residues and are distinct.  Modulo $p$,
\[
  x+\chi(q_i+y_j)
  \equiv x+(q_i+y_j)^n.
\]
Since $q_i^n=1$, we have
\[
  x+(q_i+y_j)^n
  =x+1+y_j^n+
  \sum_{r=1}^{n-1}\binom nr q_i^r y_j^{n-r}.
\]
Thus the matrix factors as the product of the Vandermonde-type matrix
$
  (q_i^r)_{1\le i\le n,\,0\le r\le n-1}
$
and a coefficient matrix whose $j$-th column is
\[
  \begin{pmatrix}
  x+1+y_j^n\\
  \binom n1y_j^{n-1}\\
  \binom n2y_j^{n-2}\\
  \vdots\\
  \binom n{n-1}y_j
  \end{pmatrix}.
\]
The first factor has nonzero determinant because the $q_i$ are distinct.  The binomial coefficients are nonzero modulo $p$.  After removing them from the last $n-1$ rows and writing the first row as $(x+1)(1,\ldots,1)+(y_1^n,\ldots,y_n^n)$, the two resulting alternants give
\[
  \det
  \begin{pmatrix}
  x+1+y_1^n & \cdots & x+1+y_n^n\\
  y_1^{n-1} & \cdots & y_n^{n-1}\\
  \vdots & & \vdots\\
  y_1 & \cdots & y_n
  \end{pmatrix}
  =\pm\prod_{1\le r<s\le n}(y_s-y_r)
  \left(x+1+(-1)^{n-1}\prod_{j=1}^n y_j\right).
\]
The Vandermonde product is nonzero because the $y_j$ are distinct.  This proves the lemma.
\end{proof}

Let
$
  c\equiv n!\pmod p.
$
Wilson's theorem gives
\begin{equation}\label{eq:wilson}
  c^2\equiv (-1)^{n+1}\pmod p.
\end{equation}
This follows from
\[
  -1\equiv (p-1)!
  \equiv n!\,(p-1)(p-2)\cdots(p-n)
  \equiv (-1)^n c^2\pmod p.
\]

Applying \cref{lem:vandermonde} with $y_j=-aj$ gives
\begin{equation}\label{eq:Da-congruence}
  D_a(x)\equiv C_a\bigl(x+1-\chi(a)c\bigr)\pmod p,
\end{equation}
where $C_a\in\Fp^\times$, since
\[
  \prod_{j=1}^n(-aj)=(-a)^n n!
  \equiv (-1)^n\chi(a)c\pmod p.
\]

If $p\equiv 1\pmod 4$, then $n$ is even and \eqref{eq:wilson} gives $c^2\equiv -1\pmod p$.  Hence $c\ne\pm1$, and therefore
\[
  1-\chi(a)c\ne0\pmod p.
\]
By \eqref{eq:Da-congruence}, $D_a(0)\not\equiv0\pmod p$, so $D_a(0)\ne0$ as an integer.

If $p\equiv 3\pmod 4$, then $n$ is odd and \eqref{eq:wilson} gives $c^2\equiv1\pmod p$.  Thus $c=\pm1$ in $\Fp$, and since $\chi(-1)=-1$ we have
\begin{equation}\label{eq:chianc}
  \chi(a n!)=\chi(a)c.
\end{equation}
Thus \eqref{eq:Da-congruence} gives nonvanishing when $\chi(a n!)=-1$.  The remaining case is supplied by the Pfaffian argument below.

\section{The half-system Pfaffian theorem}\label{sec:half-system}

Assume throughout this section that
$
  p\equiv3\pmod4.
$
Let
$
  \QR=\{u^2:u\in\Fpx\}
$
be the set of nonzero quadratic residues.  Since $\chi(-1)=-1$, each pair $\{u,-u\}$ contains exactly one element of $\QR$.

For any sign function
$
  \varepsilon:\QR\to\{\pm1\},
$
define a half-system
\[
  C_\varepsilon=\{\varepsilon_q q:q\in\QR\}\subset\Fpx.
\]
Thus $C_\varepsilon$ contains exactly one element from each pair $\{u,-u\}$.

Index rows and columns by $\QR$ and define
\begin{align*}
  M_\varepsilon(x)
  &=\bigl(x+\chi(r-\varepsilon_q q)\bigr)_{r,q\in\QR},\\
  M_\varepsilon^{(0)}(x)
  &=
  \begin{pmatrix}
    x & (x-\varepsilon_q)_{q\in\QR}\\
    (x+1)_{r\in\QR} & M_\varepsilon(x)
  \end{pmatrix}.
\end{align*}
The second formula is exactly the augmentation by the row and column indexed by $0$, because
\[
  \chi(0-\varepsilon_q q)=-\varepsilon_q,
  \qquad
  \chi(r-0)=1
  \qquad(r,q\in\QR).
\]

Let
\[
  A=\{q\in\QR:\varepsilon_q=1\},
  \qquad m=|A|.
\]
In the following theorem the set $A$ is placed first in the ordering of $\QR$; the displayed Pfaffian squares are independent of this auxiliary ordering.

\begin{theorem}[Half-system Pfaffian theorem]\label{thm:half-system}
There exists a positive integer $\beta_p$ depending only on $p$ such that the following holds for every sign function $\varepsilon:\QR\to\{\pm1\}$.

Let
\[
  B=(\chi(r+q))_{r,q\in\QR},
  \qquad
  P=(\chi(r-q))_{r,q\in\QR},
\]
and put
$
  T=B^{-1}P.
$
After ordering $\QR$ with $A$ first, write
\[
  T=
  \begin{pmatrix}
    S & R\\
    U & V
  \end{pmatrix},
\]
where $S$ is the $m\times m$ principal block indexed by $A$.

\smallskip
\noindent\textup{(i)} If $m$ is even, then
\[
  \det M_\varepsilon(x)
  =\beta_p^2\Pf(S)^2(nx-1)
\]
and
\[
  \det M_\varepsilon^{(0)}(x)
  =-\beta_p^2\Pf(S)^2\bigl(n+(2n+1)x\bigr).
\]
Here $\Pf(S)=1$ if $m=0$.

\smallskip
\noindent\textup{(ii)} If $m$ is odd, define the even skew-symmetric matrix
\[
  S^+=
  \begin{pmatrix}
    0 & \one_m^t\\
    -\one_m & S
  \end{pmatrix}.
\]
Then
\[
  \det M_\varepsilon(x)
  =\beta_p^2\Pf(S^+)^2x
\]
and
\[
  \det M_\varepsilon^{(0)}(x)
  =\beta_p^2\Pf(S^+)^2.
\]
\end{theorem}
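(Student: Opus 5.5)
The proof will factor $M_\varepsilon(0)=BE$, extract the Pfaffian from $E$, and handle the $xJ$ term and the bordering by row and column sums; the one input not yet secured is the square form of $\det B$. Throughout, $p\equiv3\pmod4$, so $\chi(-1)=-1$. The first step is to treat $B$ and $P$ as group matrices on the cyclic group $\QR$ of odd order $n$. Since $\chi(q)=1$ for $q\in\QR$, we have $B_{r,q}=\chi(rq^{-1}+1)$ and $P_{r,q}=\chi(rq^{-1}-1)$. Both are therefore functions of $rq^{-1}$, so $B$ and $P$ lie in the commutative algebra of $\QR$-circulants and $BP=PB$. Moreover $B$ is symmetric, and $P$ is skew-symmetric because $\chi(q-r)=-\chi(r-q)$.

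Hence, once $B$ is invertible, $T^t=P^tB^{-1}=-PB^{-1}=-B^{-1}P=-T$, so $T$ is skew-symmetric and $S$ is a skew principal block. Invertibility of $B$ comes from its eigenvalues $\lambda_\psi=\sum_{t\in\QR}\chi(1+t)\psi(t)$, where $\psi$ runs over the characters of $\QR$. For trivial $\psi$, the standard evaluation $\sum_{u\in\Fp}\chi(u^2+1)=-1$ gives $\lambda_1=\tfrac12(-1-1)=-1$. So the row sums of $B$ are $-1$, that is, $B\one=-\one$. The same computation with $\sum_u\chi(1-u^2)=-\chi(-1)=1$ gives $P\one=0$, hence $T\one=0$ and $\one^tT=0$.

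The second step is the factorization. In $M_\varepsilon(0)$, the column indexed by $q$ is $(\chi(r-q))_r$, a column of $P$, if $q\in A$. It is $(\chi(r+q))_r$, a column of $B$, if $q\notin A$. Thus $M_\varepsilon(0)=BE$, where $E$ has the columns of $T$ indexed by $A$ and unit vectors elsewhere. In the ordering with $A$ first this is exactly $E=\begin{psmallmatrix}S&0\\U&I\end{psmallmatrix}$, so $\det M_\varepsilon(0)=\det B\cdot\Pf(S)^2$.

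For the $x$-dependence I will use $\det(M+xJ)=\det M+x\,\one^t\adj(M)\one$ and its bordered analogue for $M^{(0)}_\varepsilon$. The bordered determinant reduces by subtracting multiples of the new row and column to the same data: $\one^t\adj(M_\varepsilon(0))\one$, $\det M_\varepsilon(0)$, and the vector of $\varepsilon$'s.

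When $m$ is even and $S$ is generically invertible, $\one^tM^{-1}\one=\one^tE^{-1}B^{-1}\one=-\one^tE^{-1}\one$. This last quantity can be computed from the block form together with $T\one=0$, written as $S\one_A+R\one=0$ and $U\one_A+V\one=0$, and with skewness of $T$. The goal is to get $\one^tE^{-1}\one=n$ (with appropriate sign), yielding $\det B\,\Pf(S)^2(1-nx)$. When $m$ is odd, $\det S=0$. In that case I will compute $\one^t\adj(E)\one$ directly: $\adj$ of a singular skew matrix of odd order is rank one, $\adj S=\mathrm{pf}\,\mathrm{pf}^t$ with $\mathrm{pf}$ the vector of sub-Pfaffians. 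This gives $(\one_A^t\mathrm{pf})^2=\Pf(S^+)^2$ by expanding the bordered Pfaffian along its first row. The augmented determinant is handled the same way. Identities valid for polynomial $x$ can be checked generically and then specialized, if needed by perturbing the entries of $S$.

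The main obstacle is showing $\det B=-\beta_p^2$ with $\beta_p$ a positive integer. The eigenvalue picture gives $\det B=\lambda_1\prod_{\psi\ne1}\lambda_\psi=-\prod_{\{\psi,\bar\psi\}}|\lambda_\psi|^2$, which is negative, but squareness is not automatic. I would first try to realize $B$, up to a congruence $B=Q^tKQ$ with $\det Q$ integral, as a skew matrix bordered by a row of ones, using $P$ and the identity $\chi(r+q)=-\chi(-r-q)$. Failing that, I would try to show that $\prod_{\psi\in H}\lambda_\psi$ over a Galois-stable half $H$ of the nontrivial characters is rational. This is possible because $n$ is odd and the Gauss-sum structure forces $\lambda_{\bar\psi}=\pm\overline{\lambda_\psi}$ compatibly.
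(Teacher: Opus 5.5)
You have proved the theorem only with $-\det B$ in place of $\beta_p^2$; the facts about $B$ that produce $\beta_p$ are missing. Your linear algebra is otherwise sound and is essentially the paper's route. The paper writes $M_\varepsilon(x)=B(E-xJ)$ and takes a Schur complement rather than using $\det(M+xJ)=\det M+x\,\one^t\adj(M)\one$, but the two are equivalent. Your computations do close. With $e=\one_m$, $f=\one_k$ and $f^tU=-e^tS$, one gets $\one^tE^{-1}\one=e^tS^{-1}e-f^tUS^{-1}e+k=0+m+k=n$. The odd case reduces to $e^t\adj(S)e=\Pf(S^+)^2$ because $S\adj(S)=0$. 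The only extra quantity in $\det M_\varepsilon^{(0)}(x)$ is $\varepsilon^t\adj(M_\varepsilon(0))\one=-\det B\,(e^t\adj(S)e-n\det S)$.

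First missing fact: you never prove that $B$ is invertible. You compute only $\lambda_1=-1$, and nothing rules out $\lambda_\psi=0$ for some nontrivial $\psi$; so your ``which is negative'' is also unproved. Since $T=B^{-1}P$ is part of the statement, this is needed. The paper gets it from \cref{lem:vandermonde} with the $y_j$ running over $\QR$. Because $\prod_{q\in\QR}q\equiv(n!)^2\equiv1$ and $n$ is odd, this gives $\det B\equiv 2C\not\equiv0\pmod p$ with $C\in\Fpx$.

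Second missing fact, the one you flag yourself: you do not show that $-\det B$ is the square of an integer. The fixes you suggest are off target. There is no sign ambiguity $\lambda_{\bar\psi}=\pm\overline{\lambda_\psi}$ to control. A ``Galois-stable half'' of the nontrivial characters will not work either: a Galois-stable set is closed under $\psi\mapsto\psi^{-1}$, because the action $\psi\mapsto\psi^a$ includes $a=-1$. So it cannot be a set of representatives of the inverse pairs, and when $n$ is prime the nontrivial characters form a single orbit anyway.

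The missing observation is that $\lambda_{\psi^{-1}}=\lambda_\psi$ exactly. This is just the symmetry of $B$ that you already recorded; directly, $\chi(1+t^{-1})=\chi(t^{-1})\chi(1+t)=\chi(1+t)$ for $t\in\QR$. Since $n$ is odd, $\psi\neq\psi^{-1}$ for $\psi\neq1$, so the nontrivial eigenvalues come in equal real pairs. Hence $\det B=-\gamma^2$ with $\gamma=\prod_{\{\psi,\psi^{-1}\}}\lambda_\psi$, taking one factor per inverse pair; this is well defined because $\lambda$ is constant on pairs. The automorphism $\zeta_n\mapsto\zeta_n^a$ sends $\lambda_\psi$ to $\lambda_{\psi^a}$ and permutes the inverse pairs, so $\gamma$ is a Galois-invariant algebraic integer and hence lies in $\mathbb Z$. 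The congruence above then gives $\beta_p=|\gamma|>0$. No congruence $B=Q^tKQ$ is needed.
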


We use two lemmas.

\begin{lemma}[The basic group matrices]\label{lem:B-P}
The matrix $B$ is invertible and
$
  \det B=-\beta_p^2
$
for some positive integer $\beta_p$.  Moreover
\[
  B^t=B,
  \qquad
  P^t=-P,
  \qquad
  BP=PB,
\]
and
\[
  B\one=-\one,
  \qquad
  P\one=0.
\]
Consequently $T=B^{-1}P$ is skew-symmetric and $T\one=0$.
\end{lemma}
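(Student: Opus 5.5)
We view $B$ and $P$ as elements of the group algebra of the cyclic group $\QR$ of order $n$ (odd), acting by multiplication. The plan is to get the algebraic identities from index substitutions and then compute $\det B$ by diagonalizing with characters of $\QR$.

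\textbf{Symmetry and commutation.} Write $B_{r,q}=\chi(r+q)$. Then $B^t=B$ is immediate. Likewise $P^t=-P$ follows from $\chi(q-r)=\chi(-1)\chi(r-q)=-\chi(r-q)$, using $p\equiv3\pmod4$. For commutation, substituting $r=t u$ with $t\in\QR$ gives $\chi(tu+q)=\chi(t)\chi(u+qt^{-1})=\chi(u+q/t)$. So both $B$ and $P$ are functions of the ratio $q/r$:
\[
  B_{r,q}=f(q/r),\qquad P_{r,q}=g(q/r),
\]
with $f(s)=\chi(1+s)$ and $g(s)=\chi(1-s)$ for $s\in\QR$. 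They are therefore convolution operators on the abelian group $\QR$, so they commute and are simultaneously diagonalized by the characters $\psi$ of $\QR$. Their eigenvalues are $\hat f(\psi)=\sum_{s\in\QR}\chi(1+s)\psi(s)$, and similarly $\hat g(\psi)$.

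\textbf{Row sums.} These give the statements about $\one$. We have $\sum_{s\in\QR}\chi(1+s)=\tfrac12\sum_{u\neq0}\chi(1+u^2)$. Using $\sum_{u\in\Fp}\chi(1+u^2)=-\chi(1)=-1$ (a standard quadratic character sum), this equals $\tfrac12(-1-\chi(1))=-1$. Hence $B\one=-\one$. Similarly $\sum_u\chi(1-u^2)=-\chi(-1)=1$ gives $\tfrac12(1-\chi(1))=0$, so $P\one=0$. Then $T=B^{-1}P$ satisfies $T^t=P^tB^{-1}=-PB^{-1}=-B^{-1}P$ by commutation, so $T$ is skew-symmetric, and $T\one=B^{-1}P\one=0$.

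\textbf{Invertibility and the square.} This is the main obstacle. Write $\det B=\prod_\psi\hat f(\psi)$. The trivial character contributes $-1$. The nontrivial characters pair up as $\psi,\bar\psi$ because $n$ is odd, so no nontrivial character is real. Since $f$ is real, $\hat f(\bar\psi)=\overline{\hat f(\psi)}$, and each pair contributes $|\hat f(\psi)|^2\ge0$. Thus
\[
  \det B=-\prod_{\{\psi,\bar\psi\}}|\hat f(\psi)|^2 .
\]
Two points remain.

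\emph{Nonvanishing.} Lift $\psi$ to a character $\lambda$ of $\Fpx$ with $\lambda^2$ restricted appropriately. Then $\hat f(\psi)$ becomes $\tfrac12\sum_{u}\chi(1+u^2)\lambda(u^2)$ plus the twisted sum. This is a Jacobi-sum combination $J(\chi,\eta)$-type expression, of absolute value $\sqrt p$ or built from such sums. I would try to show $|\hat f(\psi)|^2$ is a fixed nonzero quantity (e.g.\ $(p+1)/4$) via the usual orthogonality computation of $\sum_\psi|\hat f(\psi)|^2$ and of the multiplicative structure. If that fails, the fallback is a $p$-adic argument: show $\det B\not\equiv0$ modulo some prime, or use the Vandermonde approach of \cref{lem:vandermonde} with $y_j\in\QR$ to get $\det B\not\equiv 0\pmod p$ directly.

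\emph{Integer square.} The product over a pair $\{\psi,\bar\psi\}$ lies in the real cyclotomic field. To get an integer square, I would group pairs under Galois conjugacy. Alternatively, and more cleanly, I would note that $\det B\cdot(-1)$ is a positive integer equal to $\det$ restricted to the complement of $\one$. On that complement, $B$ is congruent to a matrix with a skew structure: since $P$ is skew and invertible on $\one^\perp$ (by the same nonvanishing argument for $\hat g$), $B P^{-1}$ restricted there is skew-symmetric of even size $n-1$. Hence $\det(B|_{\one^\perp})=\Pf(BP^{-1})^2\det P|_{\one^\perp}$, and $\det P|_{\one^\perp}=\Pf(P')^2$ for the skew matrix $P'$. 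Rationality of these Pfaffians (up to the index factor $n$ from $\one^\perp$) would then give a rational square, hence an integer square. Making this last bookkeeping precise is where I expect the real work to lie.
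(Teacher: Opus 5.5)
Your treatment of symmetry, commutation, the row sums and the skew-symmetry of $T$ is correct and matches the paper. Your Vandermonde fallback for $\det B\ne0$ is exactly the paper's argument: with $y_j$ running over $\QR$ one has $\prod_j y_j=(n!)^2\equiv1$ and $n$ odd, so the linear factor at $x=0$ is $2$.

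The genuine gap is the claim that $-\det B$ is an integer square. Grouping the pairs $\{\psi,\bar\psi\}$ into Galois orbits only writes $-\det B$ as a product of norms $N_{\mathbb Q(\zeta_d)/\mathbb Q}(\hat f(\psi))$. These are integers, but nothing in that grouping makes them squares. The missing idea is the inversion symmetry $f(s^{-1})=\chi(1+s^{-1})=\chi(s^{-1})\chi(s+1)=f(s)$ for $s\in\QR$, which gives $\hat f(\bar\psi)=\hat f(\psi)$. Combined with your observation $\hat f(\bar\psi)=\overline{\hat f(\psi)}$, every eigenvalue is real and
\[
  \det B=-\Bigl(\prod_{\{\psi,\bar\psi\},\,\psi\ne1}\hat f(\psi)\Bigr)^2,
\]
with one factor per pair; this is well defined by the symmetry. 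The automorphism $\zeta_n\mapsto\zeta_n^k$ with $\gcd(k,n)=1$ sends $\hat f(\psi)$ to $\hat f(\psi^k)$ and permutes the pairs. So this product is a Galois-invariant algebraic integer, i.e.\ a rational integer. That is the paper's proof, and it makes the $\one^\perp$ detour unnecessary.

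Your $\one^\perp$ route can be completed, but as written it rests on two unproved inputs.

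First, it needs $P$ invertible on $\one^\perp$. Knowing that $\hat g(\psi)$ is half a sum of two Jacobi sums of modulus $\sqrt p$ does not exclude cancellation. The Vandermonde lemma applied to $P$ alone is useless because $\det P=0$. Instead you would apply it to $P+xJ$, where now $\prod_j y_j=\prod_{q\in\QR}(-q)\equiv-1$. This gives $\det(P+xJ)\equiv Cx\pmod p$ with $C\ne0$, which you compare with $\det(P+xJ)=nx\prod_{\psi\ne1}\hat g(\psi)$.

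Second, the bookkeeping must be done. Let $V$ be the matrix of the rational basis $e_i-e_n$ of $\one^\perp$; its Gram determinant is $n$. A rational skew-adjoint operator $X$ on $\one^\perp$ then has determinant $\Pf(V^tXV)^2/n$. Applying this to $X=BP^{-1}$ and $X=P$ shows that $-\det B=\det(B|_{\one^\perp})$ is a rational square, hence an integer square.

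Finally, your first nonvanishing idea fails: $|\hat f(\psi)|^2$ is not constant. By Parseval its average over nontrivial $\psi$ is $(p+1)/2$, and for $p=11$ it takes the values $6\pm2\sqrt5$. So only the Vandermonde route actually proves $\det B\ne0$.
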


\begin{proof}
For $r,q\in\QR$,
\[
  \chi(r+q)=\chi(1+qr^{-1}),
  \qquad
  \chi(r-q)=\chi(1-qr^{-1}),
\]
because $r\in\QR$.  Thus $B$ and $P$ are group matrices on the abelian group $\QR$; hence they commute.

The symmetry of $B$ is immediate.  Since $p\equiv3\pmod4$, $\chi(-1)=-1$, so
\[
  \chi(q-r)=-\chi(r-q),
\]
and therefore $P^t=-P$.

For $r\in\QR$,
\[
  \sum_{q\in\QR}\chi(r+q)=\sum_{t\in\QR}\chi(1+t)=-1,
\]
and
\[
  \sum_{q\in\QR}\chi(r-q)=\sum_{t\in\QR}\chi(1-t)=0.
\]
The two evaluations follow from
\[
  \sum_{t\in\QR}\chi(1+t)
  =\frac12\sum_{t\in\Fpx}(1+\chi(t))\chi(1+t)=-1
\]
and
\[
  \sum_{t\in\QR}\chi(1-t)
  =\frac12\sum_{t\in\Fpx}(1+\chi(t))\chi(1-t)=0,
\]
using the standard quadratic sums
\[
  \sum_{t\in\Fp}\chi(t^2+t)=-1,
  \qquad
  \sum_{t\in\Fp}\chi(-t^2+t)=1.
\]
Thus $B\one=-\one$ and $P\one=0$.

It remains to determine the sign class of $\det B$.  Since $B$ is a group matrix on $\QR$, its eigenvalues are
\[
  \lambda_\psi=\sum_{t\in\QR}\chi(1+t)\psi(t),
\]
where $\psi$ runs through the characters of the cyclic group $\QR$.  The trivial character gives $\lambda_1=-1$.  Also
\[
  \chi(1+t^{-1})=\chi(1+t)
  \qquad(t\in\QR),
\]
so $\lambda_\psi=\lambda_{\psi^{-1}}$.  Since $n=|\QR|$ is odd, the only character equal to its inverse is the trivial character.  Hence
\[
  \det B=-\prod_{\{\psi,\psi^{-1}\},\,\psi\ne1}\lambda_\psi^2.
\]
Each $\lambda_\psi$ is an algebraic integer.  The product over the nontrivial inverse-pairs is fixed by every automorphism of the relevant cyclotomic field, since such automorphisms permute the characters of $\QR$.  Hence the product lies in $\mathbb Q$.  It is also an algebraic integer, and therefore a rational integer.  Thus $\det B=-\beta_p^2$ for some integer $\beta_p\ge0$.

To see that $\beta_p\ne0$, apply \cref{lem:vandermonde} with $y_j$ running through the elements of $\QR$.  Since
\[
  \prod_{q\in\QR}q=(n!)^2\equiv1\pmod p
\]
when $p\equiv3\pmod4$, and since $n$ is odd, the factor in \cref{lem:vandermonde} at $x=0$ is $2\ne0$ in $\Fp$.  Thus $\det B\not\equiv0\pmod p$, so $B$ is invertible and $\beta_p>0$.

Since $B^t=B$, $P^t=-P$, and $BP=PB$,
\[
  T^t=(B^{-1}P)^t=P^tB^{-1}=-PB^{-1}=-B^{-1}P=-T.
\]
Also $T\one=B^{-1}P\one=0$.
\end{proof}

\begin{lemma}[A linear algebra lemma]\label{lem:linear}
Let $T$ be an $n\times n$ skew-symmetric matrix over a field of characteristic $0$, and assume $T\one_n=0$.  Let $A$ be a subset of size $m$, put $k=n-m$, and order the indices with $A$ first.  Write
\[
  T=
  \begin{pmatrix}
    S & R\\
    U & V
  \end{pmatrix},
\]
where $S$ is $m\times m$.  Define
\[
  E=
  \begin{pmatrix}
    S & 0\\
    U & I_k
  \end{pmatrix},
  \qquad
  J=\one_n\one_n^t,
\]
and
\[
  \varepsilon=
  \begin{pmatrix}
    \one_m\\
    -\one_k
  \end{pmatrix}.
\]
Set
\[
  H(x)=
  \begin{pmatrix}
    x & x\one_n^t-\varepsilon^t\\
    -(x+1)\one_n & E-xJ
  \end{pmatrix}.
\]
If $m$ is even, then
\[
  \det(E-xJ)=\det(S)(1-nx)
\]
and
\[
  \det H(x)=\det(S)\bigl(n+(2n+1)x\bigr).
\]
If $m$ is odd, define
\[
  \Delta=
  \det
  \begin{pmatrix}
    0 & \one_m^t\\
    -\one_m & S
  \end{pmatrix}.
\]
Then
\[
  \det(E-xJ)=-x\Delta
\]
and
\[
  \det H(x)=-\Delta.
\]
Moreover,
\[
  \det(S)=\Pf(S)^2\quad(m\text{ even}),
  \qquad
  \Delta=\Pf
  \begin{pmatrix}
    0 & \one_m^t\\
    -\one_m & S
  \end{pmatrix}^{\!2}
  \quad(m\text{ odd}).
\]
\end{lemma}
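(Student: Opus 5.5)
The plan is to treat every determinant in the statement as a bordered determinant or a rank-one perturbation of $E$. Each such determinant then reduces to a bilinear form in $\adj(E)$. We compute $\adj(E)$ blockwise through $\adj(S)$, as a polynomial identity in the entries, so that no invertibility of $S$ is needed. The hypothesis $T\one_n=0$ enters through the relations
\[
  S\one_m+R\one_k=0,\qquad U=-R^t,
\]
the second coming from skew-symmetry of $T$. The Pfaffian identities at the end are the standard facts that an even skew-symmetric matrix has determinant $\Pf^2$; the matrix $\begin{psmallmatrix}0&\one_m^t\\-\one_m&S\end{psmallmatrix}$ is even skew-symmetric when $m$ is odd.

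First I would handle $\det(E-xJ)$. By the matrix determinant lemma, $\det(E-xJ)=\det E-x\,\one^t\adj(E)\one$ and $\det E=\det S$. When $S$ is invertible, block inversion of $E$ gives
\[
  \one_n^t\adj(E)\one_n=\one_m^t\adj(S)\one_m-\one_k^tU\adj(S)\one_m+k\det S .
\]
Both sides are polynomials in the free entries of $S$ and $U$, so this identity holds unconditionally. Substituting $U=-R^t$ and $R\one_k=-S\one_m$ turns the middle term into $-\one_m^tS^t\adj(S)\one_m=\one_m^tS\adj(S)\one_m=m\det S$, which therefore enters with a minus sign.

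If $m$ is even, $\adj(S)$ is skew-symmetric, since the adjugate of an even skew matrix is skew. Hence $\one^t\adj(S)\one=0$ and the total is $(m+k)\det S=n\det S$, which gives $\det S(1-nx)$. If $m$ is odd, then $\det S=0$. Only $\one_m^t\adj(S)\one_m$ survives, and the bordered-determinant expansion identifies this term with $\Delta$, which gives $-x\Delta$.

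For $H(x)$, I would split off the $x$-dependence as a rank-one term. With $u=(1,-\one_n)^t$ and $v=(1,\one_n)^t$ one has
\[
  H(x)=H_0+x\,uv^t,\qquad H_0=\begin{pmatrix}0&-\varepsilon^t\\-\one_n&E\end{pmatrix}.
\]
Hence $\det H=\det H_0+x\,v^t\adj(H_0)u$. The constant term is a bordered determinant, $\det H_0=-\varepsilon^t\adj(E)\one_n$. For the linear coefficient, I would first perform the row operation that adds row $0$ to rows and the column operations adding column $0$. This should reduce $v^t\adj(H_0)u$ to a combination of $\one^t\adj(E)\one$, $\varepsilon^t\adj(E)\one$ and $\det E$.

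The key new quantity is therefore $\varepsilon^t\adj(E)\one$. Writing $\varepsilon^t=(\one_m^t,-\one_k^t)$ and using the same block formula, it equals
\[
  \one_m^t\adj(S)\one_m+\one_k^tU\adj(S)\one_m-k\det S,
\]
which is $-(m+k)\det S=-n\det S$ in the even case and $\Delta$ in the odd case. This yields the constants $n\det S$ and $-\Delta$. The expected target for the linear coefficient is $(2n+1)\det S$ in the even case and $0$ in the odd case.

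I expect the main obstacle to be the bookkeeping for $v^t\adj(H_0)u$, namely the signs and the exact form of the $(n+1)\times(n+1)$ adjugate. I would first try to avoid that adjugate entirely. Since $\det H$ is affine in $x$, it is enough to evaluate it at $x=0$ and at $x=-1$. At $x=-1$ the first column of $H$ becomes $(-1,0,\dots,0)^t$, so
\[
  \det H(-1)=-\det(E+J).
\]
This is already known from the first part with $x=-1$, namely $\det S(1+n)$ or $\Delta$. Combining the two evaluations gives $\det H(x)=\det S\bigl(n+(2n+1)x\bigr)$ when $m$ is even and $-\Delta$ when $m$ is odd. This shortcut makes the rank-one computation unnecessary.
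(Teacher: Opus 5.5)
Your proof is correct. For $H(x)$ it is the paper's argument: $\det H$ is affine in $x$, $\det H(-1)=-\det(E+J)$ comes from the first part, and one separate evaluation at $x=0$ finishes it. The difference lies in how the two core determinants are evaluated.

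The paper computes $\det(E-xJ)$ by taking the Schur complement of $I_k-x\one_k\one_k^t$. Using $\one_k^tU=-\one_m^tS$, this reduces it to the rank-one update of $S$ in \eqref{eq:schur-K}. It then treats the even case with $S^{-1}$ plus a polynomial-continuation argument over even skew-symmetric $S$, and the odd case with the rank-one adjugate formula. For $\det H(0)$ it takes a second Schur complement, against $I_k$, and expands the resulting bordered determinant \eqref{eq:H0} by hand.

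You instead write $\adj(E)$ blockwise once. Then $\det(E-xJ)=\det S-x\,\one^t\adj(E)\one$ and $\det H(0)=-\varepsilon^t\adj(E)\one$ are two bilinear forms in the same matrix. Both reduce to the quantities $\one_m^t\adj(S)\one_m$, $\one_m^tS\adj(S)\one_m=m\det S$ and $k\det S$. For even $m$ you kill $\one_m^t\adj(S)\one_m$ by the skew-symmetry of $\adj(S)$ rather than of $S^{-1}$.

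What your route buys is uniformity. The affine dependence on $x$ is immediate from the matrix determinant lemma. The only continuation argument you need is for the block formula of $\adj(E)$ over unrestricted $S,U$, where it is trivial. The odd case of $\det H(0)$, which the paper treats tersely, falls out of the same formula. In exchange, the paper's route never forms an $n\times n$ adjugate and works directly with determinants built from $S$.

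One wording slip: the signed middle term $-\one_k^tU\adj(S)\one_m$ equals $+m\det S$. That plus sign is exactly what your total $(m+k)\det S$ uses, so the remark that it ``enters with a minus sign'' should be dropped.
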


\begin{proof}
Let $e=\one_m$ and $f=\one_k$.  Since $T\one_n=0$ and $T^t=-T$, we have
\begin{equation}\label{eq:fU}
  f^tU=-e^tS.
\end{equation}
Take the Schur complement of $I_k-xff^t$ in $E-xJ$.  Since
\[
  (I_k-xff^t)^{-1}=I_k+\frac{x}{1-kx}ff^t,
\]
we obtain the polynomial identity
\begin{equation}\label{eq:schur-K}
  \det(E-xJ)
  =(1-kx)
  \det\left(S-\frac{x}{1-kx}e(e^tS+e^t)\right).
\end{equation}
For even $m$, assume first that $S$ is invertible.  Then $S^{-1}$ is skew-symmetric, so $e^tS^{-1}e=0$, and \eqref{eq:schur-K} gives
\[
\begin{aligned}
  \det(E-xJ)
  &=(1-kx)\det(S)
  \left(1-\frac{x}{1-kx}(e^tS+e^t)S^{-1}e\right)\\
  &=(1-kx)\det(S)
  \left(1-\frac{mx}{1-kx}\right)\\
  &=\det(S)(1-nx).
\end{aligned}
\]
Both sides are polynomial in the entries of $S$, so the identity holds for all even skew-symmetric $S$.

For odd $m$, $\det S=0$.  Using \eqref{eq:schur-K} and the adjugate formula for a rank-one perturbation,
\[
  \det(S-\alpha e(e^tS+e^t))
  =-\alpha(e^tS+e^t)\adj(S)e,
\]
where $\alpha=x/(1-kx)$.  Since $S\adj(S)=0$, this becomes
\[
  \det(E-xJ)=-x\,e^t\adj(S)e.
\]
Finally
\[
  e^t\adj(S)e
  =\det
  \begin{pmatrix}
    0 & e^t\\
    -e & S
  \end{pmatrix}
  =\Delta,
\]
which proves the asserted formula for $\det(E-xJ)$.

For $H(x)$, note that
\[
  H(x)=H(0)+x
  \begin{pmatrix}
    1\\
    -\one_n
  \end{pmatrix}
  \begin{pmatrix}
    1 & \one_n^t
  \end{pmatrix}.
\]
Thus $\det H(x)$ is affine in $x$.  At $x=-1$ the lower-left block of $H(-1)$ is zero, so
\[
  \det H(-1)=-\det(E+J).
\]
Using the formula for $\det(E-xJ)$ at $x=-1$ gives
\[
  \det H(-1)=
  \begin{cases}
    -(n+1)\det S, & m\text{ even},\\
    -\Delta, & m\text{ odd}.
  \end{cases}
\]
At $x=0$, taking the Schur complement of the lower-right identity block $I_k$ gives
\begin{equation}\label{eq:H0}
  \det H(0)
  =\det
  \begin{pmatrix}
    k & e^t(S-I_m)\\
    -e & S
  \end{pmatrix}.
\end{equation}
If $m$ is even and $S$ is invertible, \eqref{eq:H0} equals
\[
  \det(S)\left(k+e^t(S-I_m)S^{-1}e\right)
  =\det(S)(k+m)=n\det(S),
\]
and the general even case follows because both sides are polynomial in the entries of $S$.  If $m$ is odd, then the term containing $k\det S$ vanishes, the contribution from the row $e^tS$ vanishes because $S\adj(S)=0$, and the remaining contribution is $-\Delta$.  Hence
\[
  \det H(0)=
  \begin{cases}
    n\det S, & m\text{ even},\\
    -\Delta, & m\text{ odd}.
  \end{cases}
\]
Since $\det H(x)$ is affine in $x$, the two values at $x=0$ and $x=-1$ determine it, giving
\[
  \det H(x)=\det(S)\bigl(n+(2n+1)x\bigr)
\]
for even $m$, and
\[
  \det H(x)=-\Delta
\]
for odd $m$.

The final Pfaffian identities are the standard identities $\det W=\Pf(W)^2$ for even skew-symmetric matrices $W$; see, for example, \cite{Knuth}.
\end{proof}

\begin{proof}[Proof of \cref{thm:half-system}]
Let $N_\varepsilon$ be the matrix without the $xJ$ part:
\[
  N_\varepsilon=(\chi(r-\varepsilon_q q))_{r,q\in\QR}.
\]
If $q\in A$, the $q$-column of $N_\varepsilon$ is the $q$-column of $P$; if $q\notin A$, it is the $q$-column of $B$.  Since $T=B^{-1}P$, we have
\[
  N_\varepsilon=BE,
\]
with $E$ as in \cref{lem:linear}.  Also $B^{-1}\one=-\one$ by \cref{lem:B-P}.  Therefore
\[
  M_\varepsilon(x)=N_\varepsilon+xJ
  =B(E-xJ),
\]
and hence
\[
  \det M_\varepsilon(x)=\det B\,\det(E-xJ).
\]
Using \cref{lem:B-P} and \cref{lem:linear} gives the asserted formulas for $\det M_\varepsilon(x)$.

For the augmented determinant, multiply the lower $n$ rows by $B^{-1}$.  Since this operation multiplies the determinant by $\det(B)^{-1}$, we get
\[
  \det M_\varepsilon^{(0)}(x)
  =\det B\,
  \det
  \begin{pmatrix}
    x & x\one^t-\varepsilon^t\\
    -(x+1)\one & E-xJ
  \end{pmatrix}.
\]
By \cref{lem:linear}, the right-hand determinant is $\det H(x)$.  The result follows from \cref{lem:B-P,lem:linear}.
\end{proof}

\section{Dilated half-systems and the sign of the column permutation}\label{sec:dilated}

We now return to the determinants $D_a(x)$ and $D_a^{(0)}(x)$.  Assume first that $p\equiv3\pmod4$.  The set
\[
  C_a=\{aj:1\le j\le n\}\subset\Fpx
\]
contains exactly one element from each pair $\{u,-u\}$, because $\{1,2,\dots,n\}$ itself has this property.  Therefore there is a unique sign function $\varepsilon_a:\QR\to\{\pm1\}$ such that
\[
  C_a=\{\varepsilon_{a,q}q:q\in\QR\}.
\]
Let
\[
  A_a=\{q\in\QR:\varepsilon_{a,q}=1\},
  \qquad m_a=|A_a|.
\]

\begin{lemma}[Parity of $m_a$]\label{lem:parity}
Assume $p\equiv3\pmod4$.  Then
\[
  m_a\text{ is odd}
  \quad\Longleftrightarrow\quad
  \chi(a n!)=1.
\]
\end{lemma}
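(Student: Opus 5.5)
The idea is to compute the product $\prod_{j=1}^n\chi(aj)$ in two ways. One way expresses it through $m_a$; the other through $\chi(a\,n!)$.

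First, by the definition of $\varepsilon_a$, each element $aj$ with $1\le j\le n$ can be written uniquely as $\varepsilon_{a,q}q$ with $q\in\QR$. Since $\chi(q)=1$, we have $\chi(aj)=\chi(\varepsilon_{a,q})=\varepsilon_{a,q}$, where $\chi(-1)=-1$ because $p\equiv3\pmod4$. Thus $\chi(aj)=1$ exactly when $aj\in A_a$. The map $j\mapsto aj$ is a bijection from $\{1,\dots,n\}$ onto $C_a$, and $C_a$ corresponds bijectively to $\QR$ via $q\mapsto\varepsilon_{a,q}q$. Hence exactly $m_a$ of the factors equal $1$ and the remaining $n-m_a$ equal $-1$, so
\[
  \prod_{j=1}^n\chi(aj)=(-1)^{\,n-m_a}.
\]

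Second, multiplicativity of $\chi$ gives
\[
  \prod_{j=1}^n\chi(aj)=\chi(a)^n\chi(n!)=\chi(a)\chi(n!)=\chi(a\,n!),
\]
using that $n$ is odd when $p\equiv3\pmod4$, so $\chi(a)^n=\chi(a)$.

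Comparing the two expressions, and using again that $n$ is odd, we get
\[
  \chi(a\,n!)=(-1)^{n-m_a}=-(-1)^{m_a}.
\]
Therefore $\chi(a\,n!)=1$ if and only if $m_a$ is odd. There is no real obstacle here. The only points needing care are the identification $\chi(\varepsilon_{a,q}q)=\varepsilon_{a,q}$, which uses $\chi(-1)=-1$, and the parity of $n$.
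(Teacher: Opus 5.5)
Your proof is correct. It uses the same double-counting as the paper: a product over the half-system $C_a$, computed once through $j\mapsto aj$ and once through $q\mapsto\varepsilon_{a,q}q$. The difference is that you multiply the Legendre symbols $\chi(aj)$, whereas the paper multiplies the elements $aj$ themselves.

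The paper obtains $a^n\,n!\equiv(-1)^{n-m_a}(n!)^2\pmod p$ in $\mathbb F_p$. It then needs Wilson's theorem twice: as $(n!)^2\equiv1\pmod p$, and through $n!\equiv\pm1$ to identify $\chi(a)\,n!$ with $\chi(a\,n!)$. Applying $\chi$ to that congruence gives exactly your identity $\chi(a\,n!)=(-1)^{n-m_a}$. In that form Wilson's theorem is unnecessary, because $\chi((n!)^2)=1$ automatically. You only need multiplicativity, $\chi(-1)=-1$, and the oddness of $n$.

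So your version is slightly more self-contained. The paper's version costs it nothing, since it reuses congruences already set up in the Vandermonde section for the vanishing criterion.
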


\begin{proof}
Taking the product of the elements of $C_a$ gives
\[
  \prod_{j=1}^n aj=a^n n!\equiv\chi(a)c\pmod p.
\]
On the other hand,
\[
  \prod_{q\in\QR}\varepsilon_{a,q}q
  =(-1)^{n-m_a}\prod_{q\in\QR}q
  =(-1)^{n-m_a}(n!)^2.
\]
When $p\equiv3\pmod4$, \eqref{eq:wilson} gives $(n!)^2\equiv1\pmod p$, and \eqref{eq:chianc} gives $\chi(a n!)=\chi(a)c$.  Hence
\[
  \chi(a n!)=(-1)^{n-m_a}.
\]
Since $n$ is odd, the right-hand side is $1$ exactly when $m_a$ is odd.
\end{proof}

\begin{lemma}[Column sign]\label{lem:column-sign}
Assume $p\equiv3\pmod4$, and order $\QR$ as $1^2,2^2,\dots,n^2$.  Let $\tau_a$ be the sign of the permutation that reorders the columns $j=1,\dots,n$ of $D_a(x)$ into the order indexed by $\QR$ through
\[
  aj=\varepsilon_{a,q}q.
\]
Then
\[
  \tau_a=s_p=(-1)^{\lfloor(p+1)/8\rfloor}.
\]
\end{lemma}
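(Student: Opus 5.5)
The plan is to split the column permutation into two permutations of the set $\{1,\dots,n\}$ and compute the sign of each. Write $|u|$ for the representative in $\{1,\dots,n\}$ of the pair $\{u,-u\}\subset\Fpx$. The assignment $k\mapsto k^2$ identifies the column order $1^2,2^2,\dots,n^2$ with positions $1,\dots,n$. Under this identification, column $j$ of $D_a(x)$ is sent to the position $k$ with $aj=\pm k^2$, that is, with $|aj|=|k^2|$. So the reordering permutation factors as $\pi=\mu^{-1}\circ\lambda_a$, where
\[
  \lambda_a(j)=|aj|,\qquad \mu(k)=|k^2|,
\]
both regarded as permutations of $\{1,\dots,n\}$. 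Hence $\tau_a=\sgn(\lambda_a)\,\sgn(\mu)$.

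Both maps live on one group. Let $G=\Fpx/\{\pm1\}$, which has odd order $n$ and is cyclic, hence isomorphic to $\mathbb Z/n\mathbb Z$. The map $u\mapsto|u|$ is a bijection from $G$ onto $\{1,\dots,n\}$. Relabelling the underlying set does not change the sign of a permutation, so we may compute both signs as permutations of $G$. There, $\lambda_a$ is translation by the class of $a$, and $\mu$ is the squaring map, which is a bijection because $|G|$ is odd.

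For $\lambda_a$: translation by an element $g$ of order $d$ in a group of order $n$ splits into $n/d$ cycles, each of length $d$. Since $n$ is odd, $d$ is odd, so every cycle is even and $\sgn(\lambda_a)=1$. This is why the sign is independent of $a$.

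For $\mu$: under $G\cong\mathbb Z/n\mathbb Z$, squaring becomes multiplication by $2$. By Zolotarev's lemma in its Frobenius--Zolotarev form for odd moduli, the sign of $x\mapsto tx$ on $\mathbb Z/n\mathbb Z$ with $\gcd(t,n)=1$ is the Jacobi symbol $\left(\frac tn\right)$. Thus
\[
  \sgn(\mu)=\left(\frac 2n\right),
\]
which is $+1$ exactly when $n\equiv\pm1\pmod 8$. If one prefers to avoid the Jacobi-symbol version, the same value follows from counting cycles of $x\mapsto 2x$ on $\mathbb Z/n\mathbb Z$ via the standard inversion count.

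It remains to match $\left(\frac 2n\right)$ with $s_p$, using $p=2n+1$ and $p\equiv 3\pmod 4$. The residue of $n$ modulo $8$ determines $p$ modulo $16$:
\[
  n\equiv1,3,5,7\pmod 8
  \quad\Longleftrightarrow\quad
  p\equiv3,7,11,15\pmod{16}.
\]
Writing $p=16k+r$ with $r\in\{3,7,11,15\}$, we get $\lfloor(p+1)/8\rfloor=2k,\ 2k+1,\ 2k+1,\ 2k+2$ respectively. The parity is therefore even for $r=3,15$ and odd for $r=7,11$. This agrees with $\left(\frac 2n\right)=+1$ for $n\equiv1,7\pmod 8$ and $-1$ for $n\equiv3,5\pmod 8$. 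Hence $\tau_a=\left(\frac 2n\right)=s_p$.

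The main point needing care is the bookkeeping at the start. One must check that the paper's reordering permutation really is $\mu^{-1}\circ\lambda_a$ (or its inverse, which has the same sign). One must also check that the signs $\varepsilon_{a,q}$ play no role in the permutation, since they are absorbed into the matrix $M_{\varepsilon_a}$ rather than into the column order.
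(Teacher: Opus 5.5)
Your proof is correct and follows the paper's argument. Both split the reordering into a translation on $G=\Fpx/\{\pm1\}$, which is even because $n$ is odd, and the squaring bijection of $G$, whose sign is $\left(\frac{2}{n}\right)$ by Zolotarev's lemma in Jacobi-symbol form, and then match this with $s_p$. The only cosmetic difference is that the paper works with the inverse of squaring, the power map $g\mapsto g^{(p+1)/4}$, and gets $\left(\frac{r}{n}\right)=\left(\frac{2}{n}\right)$ from $2r\equiv1\pmod n$, while you apply Zolotarev to squaring directly; this gives the same sign, since a permutation and its inverse have equal sign.
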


\begin{proof}
For each $j\in\{1,\dots,n\}$, the corresponding element of $\QR$ is
\[
  q(j)=\chi(aj)aj=\chi(a)a\,\chi(j)j.
\]
Multiplication by the fixed element $\chi(a)a\in\QR$ has sign $+1$ on $\QR$: after identifying $\QR$ with the quotient group
\[
  G=\Fpx/\{\pm1\},
\]
it is a translation in the group $G$, whose order $n$ is odd; all translation cycles have odd length and hence even sign.

It remains to compute the sign of $j\mapsto\chi(j)j$.  Let $\pi$ be the permutation of $\{1,\dots,n\}$ determined by
\[
  \pi(j)^2\equiv\chi(j)j\pmod p.
\]
In the quotient group $G$, this means
\[
  [\pi(j)]^2=[j].
\]
Since $|G|=n$ is odd, the inverse of the squaring map on $G$ is the power map
\[
  [u]\longmapsto [u]^r,
  \qquad r=\frac{n+1}{2}=\frac{p+1}{4}.
\]
Thus $\pi$ is conjugate to the power permutation $g\mapsto g^r$ of the cyclic group $G$.  For $n=1$ the sign is $1$, and we use the convention $(a/1)=1$.  For odd $n>1$, Zolotarev's lemma in its Jacobi-symbol form gives the sign of multiplication by $r$ on a cyclic group of order $n$; see \cite[Chapter~3]{IrelandRosen}.  Hence
\[
  \sgn(\pi)=\left(\frac{r}{n}\right).
\]
Since $2r\equiv1\pmod n$,
\[
  \left(\frac{r}{n}\right)=\left(\frac{2}{n}\right)
  =(-1)^{(n^2-1)/8}.
\]
If $p=8h+3$, then $n=4h+1$ and $(n^2-1)/8\equiv h\pmod2$.  If $p=8h+7$, then $n=4h+3$ and $(n^2-1)/8\equiv h+1\pmod2$.  In both cases
\[
  (-1)^{(n^2-1)/8}=(-1)^{\lfloor(p+1)/8\rfloor}.
\]
This proves the lemma.
\end{proof}

\section{Proof of the main theorems}\label{sec:proofs}

\begin{proof}[Proof of \cref{thm:square}]
Assume $p\equiv3\pmod4$.  Reordering the nonzero columns of $D_a(x)$ and $D_a^{(0)}(x)$ as in \cref{lem:column-sign} changes both determinants by the same sign $s_p$.  In the new order they are precisely the half-system determinants attached to $\varepsilon_a$.  Therefore
\begin{align*}
  s_pD_a(x)&=\det M_{\varepsilon_a}(x),\\
  s_pD_a^{(0)}(x)&=\det M_{\varepsilon_a}^{(0)}(x).
\end{align*}
We shall use the following elementary fact: if $R=u/v\in\mathbb Q$ with $(u,v)=1$ and $R^2\in\mathbb Z$, then $v^2\mid u^2$, hence $v=1$ and $R\in\mathbb Z$.

If $\chi(a n!)=1$, then $m_a$ is odd by \cref{lem:parity}.  Applying \cref{thm:half-system} gives
\[
  s_pD_a(x)=\beta_p^2\Pf(S_a^+)^2x
\]
and
\[
  s_pD_a^{(0)}(x)=\beta_p^2\Pf(S_a^+)^2.
\]
The factor $\beta_p\Pf(S_a^+)$ is rational because $T=B^{-1}P$ has rational entries.  It is nonzero: by \eqref{eq:Da-congruence}, the polynomial $D_a(x)$ is congruent modulo $p$ to $C_ax$ with $C_a\ne0$.  Hence
\[
  \rho_{p,a}=\bigl|\beta_p\Pf(S_a^+)\bigr|>0
\]
has the desired property over $\mathbb Q$.  Since $s_pD_a(x)/x$ has integral coefficients and is constant, $\rho_{p,a}^2\in\mathbb Z$; hence $\rho_{p,a}\in\mathbb Z$.

If $\chi(a n!)=-1$, then $m_a$ is even.  Applying \cref{thm:half-system} gives
\[
  s_pD_a(x)=\beta_p^2\Pf(S_a)^2(nx-1)
\]
and
\[
  s_pD_a^{(0)}(x)
  =-\beta_p^2\Pf(S_a)^2\bigl(n+(2n+1)x\bigr).
\]
Again the Pfaffian is rational.  It is nonzero because \eqref{eq:Da-congruence} gives
\[
  D_a(0)\equiv 2C_a\not\equiv0\pmod p.
\]
Put
\[
  \sigma_{p,a}=\bigl|\beta_p\Pf(S_a)\bigr|>0
\]
Then $\sigma_{p,a}$ works over $\mathbb Q$.  Evaluating at $x=0$ gives $\sigma_{p,a}^2=-s_pD_a(0)\in\mathbb Z$, so $\sigma_{p,a}\in\mathbb Z$.
\end{proof}

\begin{proof}[Proof of \cref{thm:vanishing}]
If $p\equiv1\pmod4$, then the argument following \eqref{eq:Da-congruence} shows that $D_a(0)\ne0$.

Assume now that $p\equiv3\pmod4$.  If $\chi(a n!)=1$, then \cref{thm:square} gives $D_a(x)=s_p\rho_{p,a}^{\,2}x$, so $D_a(0)=0$.  If $\chi(a n!)=-1$, then \eqref{eq:Da-congruence} gives $D_a(0)\not\equiv0\pmod p$, hence $D_a(0)\ne0$.  This proves the criterion.
\end{proof}

\begin{proof}[Proof of \cref{thm:sun}]
Take $a=n!$.  Then
\[
  \chi(a n!)=\chi((n!)^2)=1.
\]
The theorem is exactly \cref{thm:vanishing,thm:square} in this special case.
\end{proof}

\bigskip
\noindent\textsc{Hong-Ge Chen},
School of Mathematics and Statistics,  Central China Normal University, Wuhan
430079, China.

\noindent\emph{Email address}: \href{mailto:hongge\_chen@whu.edu.cn}{\texttt{hongge\_chen@whu.edu.cn}}

\medskip
\noindent\textsc{Fei Liu},
Department of Mathematics, Run Run Shaw Building, The University of Hong Kong,
Hong Kong.

\noindent\emph{Email address}: \href{mailto:liufei54@pku.edu.cn}{\texttt{liufei54@pku.edu.cn}}


\begin{thebibliography}{9}

\bibitem{Sun}
Z.-W. Sun,
\emph{On some determinants with Legendre symbol entries},
Finite Fields Appl. \textbf{56} (2019), 285--307.

\bibitem{GrinbergSunZhao}
D. Grinberg, Z.-W. Sun and L. Zhao,
\emph{Proof of three conjectures on determinants related to quadratic residues},
arXiv:2007.06453.

\bibitem{IrelandRosen}
K. Ireland and M. Rosen,
\emph{A Classical Introduction to Modern Number Theory},
2nd ed.,
Graduate Texts in Mathematics, vol. 84,
Springer, 1990.

\bibitem{Wu}
H.-L. Wu,
\emph{Determinants concerning Legendre symbols},
arXiv:2012.00502.

\bibitem{WangWu}
L.-Y. Wang and H.-L. Wu,
\emph{On the cyclotomic field $\mathbb Q(e^{2\pi{\bf i}/p})$ and Zhi-Wei Sun's conjecture on $\det M_p$},
Finite Fields Appl. \textbf{101} (2025), Article 102533; arXiv:2401.05853.

\bibitem{RenLuo}
C.-K. Ren and X.-Q. Luo,
\emph{On certain determinants and the square root of some determinants involving Legendre symbols},
arXiv:2407.04556.

\bibitem{RenSun}
C.-K. Ren and Z.-W. Sun,
\emph{Evaluation of a determinant involving Legendre symbols},
arXiv:2507.18589.

\bibitem{YangZhang}
Y. Yang and Y. Zhang,
\emph{Two determinant evaluations in Sun's conjectures involving Legendre symbols},
arXiv:2605.19517.

\bibitem{Knuth}
D. E. Knuth,
\emph{Overlapping Pfaffians},
Electron. J. Combin. \textbf{3} (1996), no. 2, Research Paper 5.

\end{thebibliography}
\end{document}